\newtheorem{theorem}{Theorem}[section]
\theoremstyle{definition}
\theoremstyle{remark}
\numberwithin{equation}{section}
\newcommand{\diag}{\operatorname{diag}}
\begin{document}

\title[Zonal polynomials]{An explicit formula for zonal polynomials}


\author[Haoming Wang]{Haoming Wang}
\address{School of Mathematics, Sun Yat-sen University, Guangzhou {\rm510275}, China}
\email{wanghm37@mail2.sysu.edu.cn}
\thanks{}


\subjclass[2020]{Primary 05Exx, 15Bxx, 32Axx; Secondary 05E05, 15B10, 32A50.}
\keywords{Symmetric function, Zonal polynomial, Character of group representation, Orthogonal group.}
\date{}

\dedicatory{}

\begin{abstract} 
The derivation of zonal polynomials involves evaluating the integral 
\[
\exp\left( - \frac{1}{2} \operatorname{tr} D_{\beta} Q D_{l} Q \right)
\]
with respect to orthogonal matrices \(Q\), where \(D_{\beta}\) and \(D_{l}\) are diagonal matrices. The integral is expressed through a polynomial expansion in terms of the traces of these matrices, leading to the identification of zonal polynomials as symmetric, homogeneous functions of the variables \(l_1, l_2, \ldots, l_n\). The coefficients of these polynomials are derived systematically from the structure of the integrals, revealing relationships between them and illustrating the significance of symmetry in their formulation. Furthermore, properties such as the uniqueness up to normalization are established, reinforcing the foundational role of zonal polynomials in statistical and mathematical applications involving orthogonal matrices.
\end{abstract}

\maketitle
\section{Introduction}

The zonal polynomial was introduced by Hua and James during the 1950s and 1960s in the study of the integral properties of orthogonal invariant polynomials on classical domains such as symmetric matrices. However, an explicit formula for zonal polynomials had not been derived until 1965 by Tumura, who expressed
\begin{equation*}
    \left[\operatorname{tr}AHBH^{\prime}\right]^{f}
\end{equation*}
as a homogeneous, symmetric polynomial in the latent roots of $A$ and $B$ separately, now known as zonal polynomials. This formula then had been neglected since 1968 by the replacement of a recursion formula found by James\cite{james1968calculation} using Laplace-Beltrami operator.  

The integrals over orthogonal groups studied by Hua and James reflect the importance of zonal polynomials in harmonic analysis and group theory. Key relationships between these integrals and zonal polynomials include
\begin{eqnarray}
    & \int_{O(n)} \left[\operatorname{tr} AH\right]^{f} (dH) = \sum_{\kappa} \frac{\chi_{2\kappa} (1)}{Z_{\kappa}(I_{n})} Z_{\kappa} (AA^{\prime})\\
    & \left[\operatorname{tr} A\right]^{f} = \frac{2^{f}f!}{(2f)!} \sum_{\kappa} \chi_{2\kappa}(1) Z_{\kappa}(A)\\
    & \int_{O(n)} Z_{\kappa}(AHBH^{\prime}) (dH) = \frac{Z_{\kappa} (A)Z_{\kappa} (B)}{Z_{\kappa}(I_{n})}
\end{eqnarray}
Hua\cite{hua1963harmonic} calculated the integral of (1.1) and didn't find the coefficients of $Z_{\kappa}(A)$ explicitly; James\cite{james1961zonal} derived the identities (1.2) and (1.3) but also failed in the computation. Tumura\cite{Tumura1965THEDO} evaluated the integral of (1.3) by decomposing orthogonal matrices into products of elementary rotation matrices. However, complications arise because an orthogonal matrix with determinant \(-1\) cannot always be expressed solely as a product of rotations, making calculations for complex integrals involving orthogonal matrices more subtle. This difficulty reflects broader challenges in classifying the representations of general linear groups, where zonal polynomials appear as linear combinations of group characters.

The theory of symmetric group representations, foundational to the development of zonal polynomials, was pioneered by the works of Littlewood and Richardson\cite{littlewood1934group}, Murnaghan\cite{murnaghan1937representations} and Thrall\cite{thrall1942symmetrized}. Foulkes\cite{foulkes1950concomitants} proposed a conjecture about the composition of two representations, called by Littlewood the plethysm, which is recently proved by Bowman and his coauthors\cite{bowman2024110}\cite{bowman2023partitionalgebraplethysmcoefficients}. The importance of zonal polynomials was further developed by Hua\cite{hua1955severalen}, James and Constantine\cite{james1961zonal, james1968calculation, constantine1966hotelling}. Hua's 1958 book\cite{hua1963harmonic} introduced a linear recursion between group characters and zonal polynomials. Meanwhile, James\cite{james1968calculation} calculated the zonal polynomials up to $f= 6$ degrees. Later on, Davis\cite{davis1980invariant} extended these results to two matrix arguments and Michel\cite{Michel2008} evaluated zonal polynomials for identity matrices. Hayashi\cite{hayashi2015decomposition} proposed a decomposition rule, and Colmenarejo and Rosas\cite{colmenarejo2015combinatorics} discussed a reduced Kronecker coeffiecient similar to our method that will be explained in the following. Modern treatments by Faraut and Kor\'anyi\cite{faraut1994analysis} and Macdonald\cite{macdonald1998symmetric} provide comprehensive understanding in symmetric functions, including zonal polynomials.

The development of zonal polynomials spans decades, strongly connected to harmonic analysis and special functions. In 1955, Herz\cite{herz1955} extended classical Bessel functions to matrix arguments, laying a foundation for future works. Gelbart\cite{gelbart1974theory} introduced Stiefel harmonics, followed by Ton That\cite{ton1976lie} who studied Lie group representations and harmonic polynomials for matrix variables. Gross and Richard\cite{gross1987special} studied special functions like hypergeometric functions. Adams\cite{Adams1998LiftingOC} advanced characteristic theory with orthogonal and metaplectic groups. Howe and Zhu\cite{Howe2002EigendistributionsFO} explored eigendistributions for the orthogonal groups and symplectic group, while Grinberg and Rubin\cite{grinberg2004radon} developed Radon inversion techniques for Gamma functions. Balderrama et al.\cite{balderrama2005formula} and Aristidou et al.\cite{aristidou2006differential} examined recursion relations for Laguerre polynomials. Miyazaki\cite{Miyazaki2011OnBI} extended Bessel integrals in degenerate cases. 

The significance of zonal polynomials extends well beyond pure mathematics into multivariate statistics, group representations, and physics. Let’s begin with some examples.

{\it Example 1:} (Wishart Distribution). The Wishart distribution describes the distribution of sample covariance matrices. If \(X\) is an \(n \times p\) matrix with independent and identically distributed normal rows \(N(\mu, \Sigma)\) for some mean vector $\mu$ covariance matrix \(\Sigma\), the sample covariance matrix \(S\) is given by
\[S = \frac{1}{n-1} X^{\prime} X,\]
The distribution of \(S\) is a Wishart distribution, denoted as
\[S \sim W_p(n, \Sigma),\]
where \(p\) is the number of variables and \(n\) is the sample size. The distribution of its eigenvalues is expressed in terms of zonal polynomials.

{\it Example 2:} (Group Characters). In group representation theory, the system of all monomials $z_{1}^{l_{1}}z_{2}^{l_{2}} \dots z_{n}^{l_{n}}$ is a complete system in the space of functions analytic on the space of symmetric matrices and the monomials of different degrees (i.e., $l_{1} + \dots + l_{n} = l_{1}^{\prime} + \dots + l_{n}^{\prime})$ are orthogonal to each another. Let $z= (z_{1},z_{2}, \dots, z_{n})$, and let $z^{[l]}$ be the vector with components
\[\sqrt{\frac{l!}{l_{1}! l_{2}!\dots l_{n}! } } z_{1}^{l_{1}}z_{2}^{l_{2}} \dots z_{n}^{l_{n}}\]
in $n(n + 1)\dots (n + l - 1) / l!$-dimensional space. The transformation $w = zU$ induces a transformation
\[w^{[l]} = z^{[l]} U^{[l]},\]
where $U^{[l]}$ is the $l$th symmetric Kronecker power of $U$.  
\[\sigma (U^{[2]})^{[l]} = \sum \chi_{2l_{1},2l_{2},\dots,2l_{n}}(U),\]
where $\chi_{2l_{1},2l_{2},\dots,2l_{n}}$ is the character (i.e., the trace of representation matrix) of the irreducible representation of $GL(n) \curvearrowright$ $\{\text{the set of all $n \times n$ symmetric matrices}\}$ according to $(2l_{1},2l_{2},\dots,2l_{n})$ and $\sigma$ represents for trace.\cite[p. 181]{weyl1946classical} These characters $\chi_{2l_{1},2l_{2},\dots,2l_{n}}$ are determined by a linear equation system of zonal polynomials.

{\it Example 3:} (Physics). In physics, integrals over the orthogonal group are essential for understanding systems with rotational symmetry. For example,
\begin{itemize}
    \item[1.] (Quantum Mechanics). In quantum mechanics, Rotational symmetry in wavefunctions requires evaluating integrals over the orthogonal group $O(m)$
    \[\langle \psi | H | \phi \rangle = \int \psi^*(U) H \phi(U) (dU),\]
    where \(H\) is a unitary self-adjoint operator. This ensures consistent physical predictions across coordinate systems.

    \item[2.] (Statistical Mechanics). In statistical mechanics, The partition function for rotationally symmetric systems is
    \[
    Z = \int e^{-\beta H(U)} dU,
    \]
    where \(H\) is the Hamiltonian and \(dU\) is the measure over phase space.  These integrals help derive thermodynamic properties like free energy and entropy.

    \item[3.] (Field Theory). In chiral Lagrangians, integrals describe pion dynamics
    \[\int \frac{F^2}{4} \operatorname{tr}(\partial_\mu U^\dagger \partial^\mu U) dU,\]
    where \(F\) denotes the decay constant. These integrals help compute correlation functions and scattering amplitudes.
\end{itemize}

Therefore understanding integrals over the orthogonal group contributes significantly to both theoretical and computational mathematics. Our approach combines methods from Paget and Wildon\cite{Rowena2016minimal} and Orellana and Zabrocki\cite{Orellana2021107943}, by representing integrals as symmetric functions of latent roots and computing coefficients based on minimal and maximal weights. Zonal polynomials facilitate the comparison of specific coefficients related to the eigenvalues of matrices $A$ and $B$. By decomposing the orthogonal matrix $H$ into elementary rotations and reflections, the integral is evaluated term by term, simplifying computations and revealing underlying algebraic structures within the orthogonal group.

\section{Characters of symmetric groups}
Let $n \geq 2 $ be an integer.
\[D(x_1, \ldots, x_n) = \prod_{1\leq i< j \leq n} (x_{i} - x_{j}).\]
\begin{theorem}
    Let \( h_1(x), \ldots, h_n(x) \) be real analytic functions. Then, 
\[
\lim_{\substack{x_1 \rightarrow x \\ x_n \rightarrow x}} \frac{\left| 
\begin{array}{ccc}
h_1(x_1) & \ldots & h_n(x_1) \\ 
\vdots & \ddots & \vdots \\ 
h_1(x_n) & \ldots & h_n(x_n) 
\end{array} 
\right|}{D(x_1, \ldots, x_n)} = \frac{(-1)^{\frac{1}{2}n(n-1)}}{1! \, 2! \, \ldots \, (n-1)!} 
\left| 
\begin{array}{ccc}
h_1(x) & \ldots & h_n(x) \\ 
h_1'(x) & \ldots & h_n'(x) \\ 
\vdots & \ddots & \vdots \\ 
h_1^{(n-1)}(x) & \ldots & h_n^{(n-1)}(x) 
\end{array} 
\right|.
\]
\end{theorem}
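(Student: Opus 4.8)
The plan is to compute the limit by performing, inside the determinant in the numerator, a sequence of row operations designed to manufacture divided differences, and then to let the points collide one at a time. Concretely, I would not send all the $x_i$ to $x$ simultaneously at first; instead I would argue by iterated limits, sending $x_n\to x_{n-1}$, then $x_{n-1}\to x_{n-2}$, and so on, which is legitimate because $h_1,\dots,h_n$ are real analytic (hence the relevant difference quotients extend to analytic, in particular continuous, functions near the diagonal). At each stage I replace the last surviving ``moving'' row by its difference with the row above it and divide by the corresponding $x_j-x_{j-1}$; this simultaneously pulls one factor out of $D(x_1,\dots,x_n)$ in the denominator and, in the limit, turns that row into a row of first derivatives.

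The key steps, in order, are as follows. First I would set up the induction on $n$: the case $n=1$ is trivial (the ratio is the empty product in the denominator, equal to $1$, and the claimed right-hand side is $h_1(x)$, a $1\times 1$ determinant, with sign $(-1)^0=1$). Second, for the inductive step I would take the $n\times n$ Vandermonde-type determinant and subtract row $i-1$ from row $i$ for $i=n,n-1,\dots,2$ in that order; after factoring $x_i-x_{i-1}$ out of the new row $i$, the denominator loses exactly the product $\prod_{i=2}^{n}(x_i-x_{i-1})$, and I would check that what remains of $D(x_1,\dots,x_n)$ after removing these factors is, up to sign, a $D$-type product that matches the inductive hypothesis. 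Third, I would let $x_i\to x_{i-1}$ for each $i$ (again, one at a time from the bottom up), so that each differenced row $(h_1[x_{i-1},x_i],\dots,h_n[x_{i-1},x_i])$ converges to $(h_1'(x_{i-1}),\dots,h_n'(x_{i-1}))$; iterating, the $k$th row from the bottom becomes a row of $(k-1)$st derivatives evaluated at the common point $x$. Fourth, I would carefully accumulate the sign: each of the $n-1$ stages contributes its own power of $-1$ coming from reordering rows back into the standard ``derivative order'', and I would verify the total is $(-1)^{n(n-1)/2}$, together with the normalizing constant $1!\,2!\cdots(n-1)!$ which arises because passing from first divided differences to genuine $(n-1)$st derivatives requires dividing by the successive factorials (a telescoping of the factors $1,2,\dots$ contributed at each level of differencing).

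Alternatively — and this may be the cleaner route to present — I would prove it in one shot using the Taylor expansion $h_j(x_i)=\sum_{m\ge 0} \frac{h_j^{(m)}(x)}{m!}(x_i-x)^m$ and multilinearity of the determinant in its rows. Substituting into the numerator expands it as a sum over functions $m:\{1,\dots,n\}\to\mathbb{Z}_{\ge0}$ of $\prod_i \frac{(x_i-x)^{m(i)}}{m(i)!}$ times the determinant with $j$th column $(h_j^{(m(1))}(x),\dots,h_j^{(m(n))}(x))^{T}$; that determinant vanishes unless the $m(i)$ are distinct, so the lowest-order surviving term has $\{m(1),\dots,m(n)\}=\{0,1,\dots,n-1\}$ in some order. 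Collecting those terms and using the Vandermonde identity $\det\big((x_i-x)^{m(i)}\big)=\pm\,D(x_1,\dots,x_n)$ (with the sign being the sign of the permutation sorting the $m(i)$) gives exactly $D(x_1,\dots,x_n)$ times $\frac{1}{0!\,1!\cdots(n-1)!}$ times the signed sum of permuted derivative determinants, which is $\frac{(-1)^{n(n-1)/2}}{1!\,2!\cdots(n-1)!}$ times the Wronskian-type determinant on the right; all higher-order terms carry extra positive powers of $(x_i-x)$ and hence vanish after dividing by $D$ and taking the limit.

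The main obstacle I anticipate is bookkeeping rather than conceptual: getting the sign $(-1)^{n(n-1)/2}$ and the constant $1!\,2!\cdots(n-1)!$ exactly right, and — in the iterated-limit approach — justifying uniformity so that the successive limits may indeed be taken one variable at a time (this is where real analyticity, as opposed to mere differentiability of some finite order, is used, since it guarantees the divided-difference rows are analytic in all variables jointly near the diagonal). In the Taylor-expansion approach the analogous subtle point is justifying the term-by-term passage to the limit, i.e.\ that the tail of the expansion, after division by $D(x_1,\dots,x_n)$, still tends to $0$; this follows because every surviving tail term contributes a determinant proportional to $D(x_1,\dots,x_n)$ with at least one extra factor $(x_i-x)$, so the quotient stays bounded and vanishes in the limit.
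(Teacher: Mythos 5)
Your proposal is correct, and your second (Taylor-expansion) argument is essentially the paper's own proof: the paper disposes of Theorem~2.1 in one sentence by invoking ``the Maclaurin series with remainder terms,'' which is exactly the expansion-by-multilinearity computation you carry out in detail, including the correct identification of the sign $(-1)^{n(n-1)/2}$ from $\det\bigl((x_i-x)^{j-1}\bigr)=\prod_{i<j}(x_j-x_i)$ versus $D=\prod_{i<j}(x_i-x_j)$ and the constant $1/(1!\,2!\cdots(n-1)!)$ from the factorials in the Taylor coefficients. Your first route via divided differences and iterated limits is a legitimate alternative the paper does not pursue; the only point in your write-up worth tightening is the disposal of the higher-order tail, where the cleaner statement is that each surviving term, summed over orderings of a fixed set of distinct exponents, is $D(x_1,\ldots,x_n)$ times a homogeneous (Schur-type) polynomial of positive degree in the $x_i-x$, which therefore vanishes in the limit.
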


\begin{proof}
        Without loss of generality, assume the limits \( x_1, \ldots, x_n \) approach 0. The theorem follows from the Maclaurin series with remainder terms.
\end{proof}

Let $f_{1} \geq f_{2} \geq \dots\geq f_{n}\geq 0$ be integers. We introduce the notation 
\[
M_{f_1, \ldots, f_n}(x_1, \ldots, x_n) = \left| 
\begin{array}{ccc}
x_1^{f_1 + n - 1} & \ldots & x_n^{f_1 + n - 1} \\ 
x_1^{f_2 + n - 2} & \ldots & x_n^{f_2 + n - 2} \\ 
\vdots & \ddots & \vdots \\ 
x_1^{f_n} & \ldots & x_n^{f_n} 
\end{array} 
\right|,
\]
It is clear that
\[M_{0, \ldots, 0}(x_1, \ldots, x_n) = D(x_1, \ldots, x_n).\]

We set
\[N(f_1, \ldots, f_n) = \lim_{\substack{x_1 \rightarrow x \\ x_n \rightarrow x}} \frac{M_{f_1, \ldots, f_n}(x_1, \ldots, x_n)}{D(x_1, \ldots, x_n)}.\]
From Theorem 2.1, we have
\[
N(f_1, \ldots, f_n) = \frac{D(f_1 + n - 1, f_2 + n - 2, \ldots, f_n)}{D(n - 1, n - 2, \ldots, 1, 0)}.
\]

\section{Polar coordinates for orthogonal matrices}

Let 
\[
V(\theta) = \begin{pmatrix}
	 \cos(\theta) & \sin(\theta) \\
	-\sin(\theta) & \cos(\theta) 
 \end{pmatrix},
\]
\begin{theorem}
Every orthogonal matrix \( \Gamma \) has the decomposition
\begin{equation}
    \Gamma = \prod_{i=1}^{n} U_{i}^{0,1}\prod_{j=i}^{n-1}V_{i}(\theta_{ij})
    \label{eq: orthogonal decomposition}
\end{equation}
where $V_{i}(\theta) = \diag (I_{i-1}, V(\theta),  I_{n-i-1} )$ is an orthogonal matrix that rotates by an angle \( \theta \) in the \( (i, i+1) \)-plane and $U_{i}= \diag (I_{i-1}, -1,  I_{n-i} )$ is an orthogonal matrix that reflects along the \( i \)-axis.    
\end{theorem}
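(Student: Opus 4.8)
The plan is to argue by induction on $n$, stripping off one coordinate at a time by a staircase of plane rotations. For $n=1$ there are no rotations available and $O(1)=\{1,-1\}=\{U_1^{0},U_1^{1}\}$, which is exactly the claimed product with the empty product of $V$'s; so assume the statement for $n-1$.

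For the inductive step let $v=\Gamma e_1\in S^{n-1}$ be the first column of $\Gamma\in O(n)$. The first task is to produce a product $R$ of rotations in the consecutive coordinate planes $(1,2),(2,3),\dots,(n-1,n)$ --- i.e.\ a product of the matrices $V_1(\cdot),\dots,V_{n-1}(\cdot)$, each occurring once --- with $Re_1=v$. This is the standard Givens sweep: left-multiplying $v$ by a rotation in the plane $(n-1,n)$ annihilates its last entry, then a rotation in the plane $(n-2,n-1)$ annihilates the $(n-1)$st entry, and so on up to the plane $(1,2)$; after the last step the vector is $(\pm1,0,\dots,0)$, and the sign can be taken to be $+1$ because a planar rotation carries \emph{any} unit vector to $(1,0)$ (a pivot that happens to vanish is cleared by a quarter turn, or simply left alone). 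Inverting this sweep yields $R$ with $Re_1=v$, built from rotations only, so that no reflection is spent at this stage. Consequently $\Gamma':=R^{-1}\Gamma$ has first column $e_1$; being orthogonal it then has first row $e_1^{\mathsf T}$ too, so $\Gamma'=\diag(1,\Gamma'')$ with $\Gamma''\in O(n-1)$ acting on the coordinates $2,\dots,n$.

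Now apply the inductive hypothesis to $\Gamma''$ relative to the block $\{2,\dots,n\}$: it expresses $\diag(1,\Gamma'')$ as $\prod_{i=2}^{n}U_i^{\epsilon_i}\prod_{j=i}^{n-1}V_j(\theta_{ij})$, every factor of which lies in the lower $(n-1)\times(n-1)$ block (the index shift carries sub-axis $i'$ to axis $i'+1$ and sub-plane $(i',i'+1)$ to plane $(i'+1,i'+2)$). Left-multiplying by $R=U_1^{0}\prod_{j=1}^{n-1}V_j(\theta_{1j})$ recovers $\Gamma$ and supplies precisely the $i=1$ factor of the target, giving
\[
\Gamma=\prod_{i=1}^{n}U_i^{\epsilon_i}\prod_{j=i}^{n-1}V_j(\theta_{ij}),\qquad \epsilon_1=0,
\]
which closes the induction. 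As a consistency check, the angles number $\sum_{i=1}^{n-1}(n-i)=\binom{n}{2}=\dim O(n)$.

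The one genuinely delicate point I anticipate is the bookkeeping: fixing the order convention inside the nested product so that the rotations produced by the stage-$1$ sweep land in the slots $V_1,\dots,V_{n-1}$ of the $i=1$ block (the sweep naturally orders them by the planes it uses, which must be reconciled with the displayed product), together with the sign and vanishing-pivot analysis that keeps $\epsilon_1=0$. It is worth stressing that the assertion is existential only: the $\theta_{ij}$ are highly non-unique (any value serves when the relevant pivot is $0$, e.g.\ when $\Gamma$ already fixes $e_1$), and since the reduction uses rotations all the way down to the final $1\times1$ block, one may in fact take $\epsilon_1=\dots=\epsilon_{n-1}=0$, with $\epsilon_n\in\{0,1\}$ encoding $\det\Gamma=(-1)^{\epsilon_n}$.
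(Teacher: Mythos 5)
Your argument is correct and follows essentially the same route as the paper's own (very terse) proof: induction on $n$, using a staircase of adjacent-plane rotations to reduce to the stabilizer of $e_1$ and pushing the single reflection down to the last axis. Your write-up is considerably more explicit about the Givens sweep, the sign of the pivot, and the ordering of factors, all of which the paper leaves implicit.
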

\begin{figure}[htbp]
    \centering
    \includegraphics[width=\linewidth]{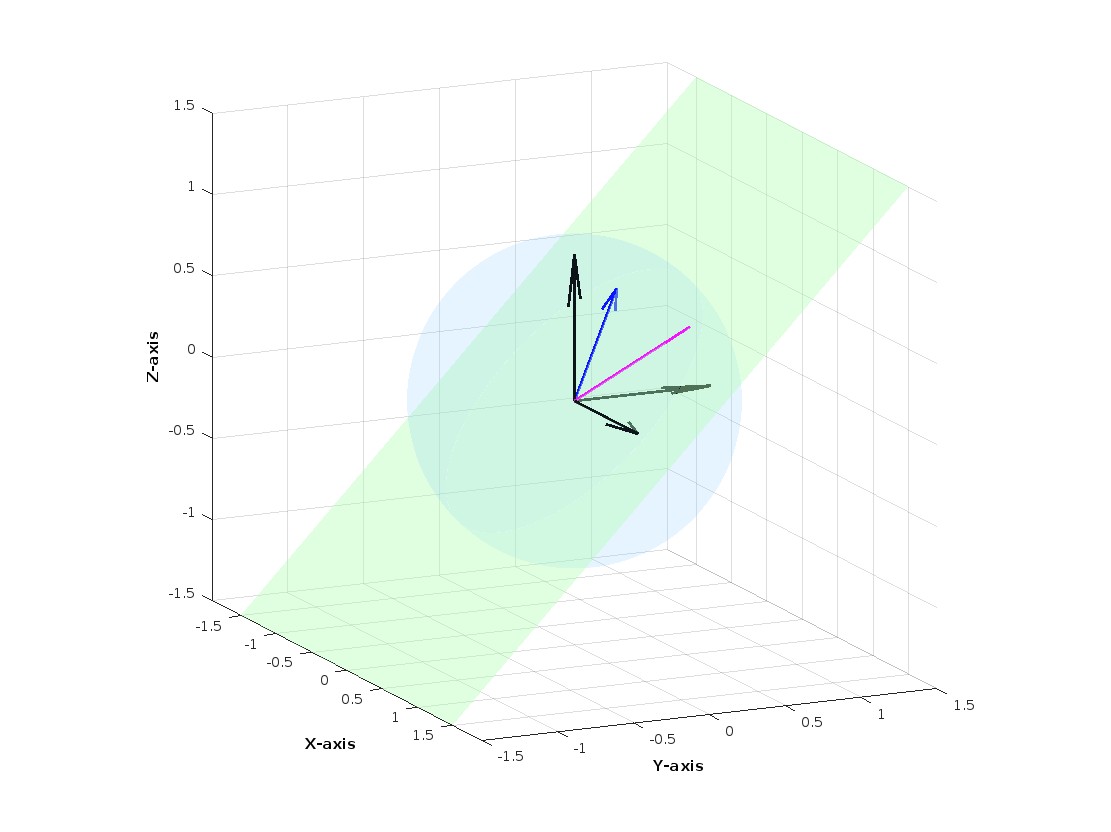}
    \caption{Orthogonal decomposition in terms of rotation and reflection. The blue one is the original vector and the red one is the target vector under rotation and reflection.}
    \label{fig:enter-label}
\end{figure}
\begin{proof}
     Every orthogonal matrix in \( n \)-dimensional space can be decomposed as a product of rotation and reflection. The rotation matrix can be performed by rotations in the \( (i, i+1) \) planes, while the reflection matrix can be decomposed into rotations and a one-dimensional reflection. Then induction assures this theorem.
\end{proof}

Now we consider polar coordinates for real-symmetric matrices. Any real symmetric matrix \( T \) can be decomposed in the form
\begin{equation}
    T = \Gamma A \Gamma^{\prime},
    \label{eq: orthogonal matrix}
\end{equation}
where \( \Gamma \) is a real orthogonal matrix with determinant \( +1 \) and \( A = \operatorname{diag}(\lambda_1, \ldots, \lambda_n) \) with \( \lambda_1 > \lambda_2 > \ldots > \lambda_n \). It is easy to see that there is in general a one-to-one correspondence between real symmetric matrices and $\{O\} \times \Gamma$, where $\{O\}$ is the set of left cosets of the group of orthogonal matrices with determinant $+1$ with respect to the subgroup of all diagonal matrices of the form $\diag(\pm 1)$.

Differentiating (\ref{eq: orthogonal matrix}) we obtain 
\[
\Gamma^{\prime} dT \Gamma = \delta \Gamma \cdot \Lambda + d\Lambda - \Lambda \delta \Gamma,
\]
where \( \delta \Gamma = \Gamma^{-1} d\Gamma \) is a skew-symmetric such that
\[\sigma(dT \cdot dT^{\prime}) = \sigma\left\{ (\delta \Gamma \cdot \Lambda - \Lambda \delta \Gamma )^2 \right\} + \sigma(d\Lambda \cdot d\Lambda^{\prime}),\]
where $\sigma(A)$ represents for the trace of the matrix $A$.

Thus, the volume element in terms of eigenvalues is
\[
\dot{T} = 2^{\frac{n(n-1)}{2}} \prod_{i < j} |\lambda_i - \lambda_j| \, d\lambda_1 \cdots d\lambda_n \dot{[O]},
\] 
where $\dot{[O]} = \prod_{1\leq i < j \leq n}\delta\gamma_{ij}$.

\begin{theorem} The volume element has the form
   \[ \dot{[O]} = \prod_{i=1}^{n-2} \prod_{j=i}^{n-2} \sin(\theta_{ij})^{n-j-1}\prod_{i=1}^{n-1} \prod_{j=i}^{n-1} d\theta_{ij},\] 
   where is defined by (\ref{eq: orthogonal decomposition}).
\end{theorem}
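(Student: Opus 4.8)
The plan is to prove the formula by induction on $n$, using the block structure of the decomposition (\ref{eq: orthogonal decomposition}). Write $\Gamma=\Gamma_1\Gamma_2\cdots\Gamma_n$ with $\Gamma_i=U_i^{0,1}\prod_{j=i}^{n-1}V_i(\theta_{ij})$ the $i$-th block. For every $i\ge 2$ the elementary rotations composing $\Gamma_i$ act only on coordinate axes of index $\ge 2$, so $\Gamma_2\cdots\Gamma_n$ fixes $e_1$ and restricts to an orthogonal transformation of $\operatorname{span}(e_2,\dots,e_n)$ parametrized exactly by the angles $\theta_{ij}$ with $i\ge 2$; hence $\Gamma e_1=\Gamma_1 e_1$, and the angles $\theta_{1,1},\dots,\theta_{1,n-1}$ of the first block are precisely the hyperspherical coordinates of the unit vector $\Gamma e_1\in S^{n-1}$. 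In other words, in these coordinates the parametrization realizes the homogeneous fibration $O(n)\to S^{n-1}$, $\Gamma\mapsto\Gamma e_1$, with fibre $O(n-1)$, as a product chart: base angles $\theta_{1,\bullet}$ times fibre angles $\theta_{i,\bullet}$ ($i\ge 2$). First I would make this identification precise from (\ref{eq: orthogonal decomposition}), noting that the discrete reflections $U_i^{0,1}$ only permute signs and so contribute nothing to the continuous volume element.

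Next I would invoke the standard facts that $\dot{[O]}=\bigwedge_{1\le i<j\le n}\delta\gamma_{ij}$, being the wedge of the components of the left-invariant Maurer--Cartan form $\delta\Gamma=\Gamma^{-1}d\Gamma$, is a bi-invariant volume form on the compact group, hence Haar; and that, by uniqueness of invariant measure on the homogeneous space $S^{n-1}=O(n)/O(n-1)$, it factors as $\dot{[O]}_n=\dot{[O]}_{n-1}\otimes d\sigma_{S^{n-1}}$ (the normalization being read off from $\delta\Gamma$ and already matched by the base case $n=2$, where $\dot{[O]}=\delta\gamma_{12}=d\theta_{1,1}$ with empty sine product). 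The rotation-invariant surface measure on $S^{n-1}$, written in the hyperspherical coordinates $\theta_{1,1},\dots,\theta_{1,n-1}$, is the classical $\prod_{j=1}^{n-2}\sin(\theta_{1,j})^{\,n-j-1}\,\prod_{j=1}^{n-1}d\theta_{1,j}$, which is exactly the $i=1$ portion of the claimed formula. Applying the inductive hypothesis to $O(n-1)$ acting on the axes $2,\dots,n$ and shifting indices $i\mapsto i+1$, $j\mapsto j+1$ yields the remaining factors $\prod_{i=2}^{n-2}\prod_{j=i}^{n-2}\sin(\theta_{ij})^{\,n-j-1}\,\prod_{i=2}^{n-1}\prod_{j=i}^{n-1}d\theta_{ij}$; combining the two gives the asserted expression.

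A more computational alternative avoids the abstract factorization: differentiating the product gives $\delta\Gamma=\sum_{k=1}^{n}\operatorname{Ad}\big((\Gamma_{k+1}\cdots\Gamma_n)^{-1}\big)\big(\Gamma_k^{-1}d\Gamma_k\big)$, where $\Gamma_k^{-1}d\Gamma_k$ is a linear combination of $d\theta_{k,k},\dots,d\theta_{k,n-1}$ whose matrix is supported on rows and columns of index $\ge k$, a property preserved by conjugation by the later blocks. Ordering the pairs $(i,j)$ and the angles $\theta_{ij}$ compatibly, the Jacobian of $(\theta_{ij})\mapsto(\delta\gamma_{ij})$ becomes block lower-triangular, with $k$-th diagonal block the Jacobian of the $k$-th hyperspherical chart; its determinant is $\prod_{j=k}^{n-2}\sin(\theta_{kj})^{\,n-j-1}$, and the product over $k$ of these diagonal determinants is the claimed density.

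Either way the conceptual content is light and the main obstacle is bookkeeping: one must extract from the precise form of (\ref{eq: orthogonal decomposition}) that the $i$-th block of rotations does move $e_i$ to the generic unit vector of $\operatorname{span}(e_i,\dots,e_n)$ with the $\theta_{ij}$ acting as genuine hyperspherical coordinates --- so that the powers of $\sin$ and the ranges $i\le j\le n-2$ come out exactly right --- and one must keep the orderings of the index pairs and of the angles consistent so that the cross-block terms in the wedge product genuinely vanish. Everything else reduces to the classical formula for the surface element of a sphere in hyperspherical coordinates.
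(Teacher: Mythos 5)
Your proof is correct and takes essentially the same route as the paper: the paper's own proof of this theorem is just the word ``Induction'' with a citation to Tumura's Theorem 2.1, and your argument --- inducting along the fibration $O(n)\to S^{n-1}$ with fibre $O(n-1)$, identifying the first block's angles as hyperspherical coordinates (or, equivalently, the block-triangular Jacobian computation via the Maurer--Cartan form) --- is precisely the standard way that induction is carried out. The one point to watch is that the decomposition \eqref{eq: orthogonal decomposition} as printed has $V_i(\theta_{ij})$ rather than $V_j(\theta_{ij})$, i.e.\ all rotations of a block in the same plane, so the identification of the $i$-th block with a chart on the sphere in $\operatorname{span}(e_i,\dots,e_n)$ relies on the (surely intended) correction you implicitly make and explicitly flag as the main bookkeeping obligation.
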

\begin{proof}
    Induction.\cite[Theorem 2.1]{Tumura1965THEDO}
\end{proof}

\section{Derivations of zonal polynomials}
\subsection{Orthonormal systems in the space of symmetric orthogonal matrices}
Let $\Re_{\text {II}}$ and $\mathfrak{C}_{\text {II}}$ be circular domain of symmetric matrices and characteristic manifold considered in \cite{hua1963harmonic}. We shall consider orthonormal systems in the spaces $\Re_{\text {II}}$ and $\mathfrak{C}_{\text {II}}$. The transformation
\begin{equation}
T=U S U^{\prime}
\label{eq: congruence}
\end{equation}
with unitary matrix $U$ maps the symmetric unitary matrix $S$ into the symmetric unitary matrix $T$. We shall arrange the elements of the matrix $S$ as a vector $s$
$$\begin{aligned}
&\left(s_{11}, \sqrt{2} s_{12}, \sqrt{2} s_{13}, \ldots,\right. \sqrt{2} s_{1 n}, s_{22}, \sqrt{2} s_{23}, \ldots \\
&\left.\ldots, \sqrt{2} s_{2 n}, \ldots, s_{n-1, n-1}, \sqrt{2} s_{n-1, n}, s_{n n}\right)
\end{aligned}$$
of dimension $n(n+1) / 2$, and we shall carry out a similar operation for the matrix $T$, denoting the vector so obtained by $t$. When the matrix $S$ is mapped into the matrix $T$ by the transformation (4.1), the vector $s$ is mapped into the vector $t$ by a linear transformation with matrix $U^{[2]}$ of order $n(n+1) / 2$.

From the vector $s$ we shall construct the vector $s^{[f]}$ of dimension
$$
\frac{(n(n+1) / 2+f-1)!}{f!(n(n+1) / 2-1)!}
$$

The matrix of the transformation of $s^{[f]}$ into $t^{[f]}$ is
\begin{equation}
\left(U^{[2]}\right)^{[f]}
\label{eq: plethysm}
\end{equation}

The components of $s^{[f]}$ are monomials of $s_{i j}$ of degree $f$. These components are linearly independent, and any homogeneous polynomial of degree $f$ can be written in the form of a linear combination of the components.

It is known that the space of homogeneous polynomials of degree $f$ of $s_{i j}$ can be decomposed into a direct sum of subspaces which are invariant under the transformation (\ref{eq: plethysm}). These subspaces have dimension $N\left(2 f_{1}, \cdots, 2 f_{n}\right)$, where $f_{1}+\cdots+f_{n}=f$. The polynomials which form the basis of an $N\left(2 f_{1}, \cdots, 2 f_{n}\right)$-dimensional subspace we shall denote by
\begin{equation}
    \varphi_{f_{1}, \ldots, f_{n}}^{(i)}(S), \quad i=1,2, \ldots, N\left(2 f_{1}, \ldots, 2 f_{n}\right)
    \label{eq: zonal1}
\end{equation}

When $S$ is mapped into $T$ by the transformation (\ref{eq: congruence}), $\varphi_{f_{1}, \ldots, f_{n}}^{(i)}(S)$ is mapped into $\varphi_{f_{1}, \cdots, f_{n}}^{(i)}(T)$ by the transformation with matrix $A_{2 f_{1}, \cdots, 2 f_{n}}(U)$. Moreover, system (\ref{eq: zonal1}) is orthogonal on $\mathfrak{C}_{\text {II}}$, i.e.

$$\int_{\mathfrak{C}_{\text{II}}} \varphi_{f_{1}, \ldots, f_{n}}^{(i)}(S) \overline{\varphi_{g_{1}, \ldots, g_{n}}^{(j)}(S)} = \delta_{ij} \delta_{fg} \rho_{f}$$
where $\dot{S}$ is the volume element of the space of symmetric unitary matrices.

By setting
$$
\psi_{f_{1}, \ldots, f_{n}}^{(i)}(S)=\rho_{f_{1}, \ldots, f_{n}}^{-\frac{1}{2}} \cdot \varphi_{f_{1}, \ldots, f_{n}}^{(i)}(S)
$$
we obtain an orthonormal system in $\mathfrak{C}_{\text {II }}$. 

\subsection{Spherical harmonics} 

Let us consider the functions
\begin{equation}
\Psi_{f_{1}, \ldots, f_{n}}(S, \bar{T})=\sum_{i} \psi_{f_{1} \ldots, f_{n}}^{(i)}(S) \overline{\psi_{f_{1}, \ldots, f_{n}}^{(i)}(T)}.
\label{eq: zonal2}
\end{equation}

We directly obtain
\begin{equation*}
\int_{\mathfrak{C}_{\mathrm{II}}} \Psi_{f_{1}, \ldots, f_{n}}(S, \bar{S}) \dot{S}=N\left(2 f_{1}, \ldots, 2 f_{n}\right).
\end{equation*}
Since the $\Psi_{f_{1}, \cdots f_{n}}(S, \bar{S})$ do not depend on $S$, it follows that
\[\Psi_{f_{1}, \cdots f_{n}}(S, \bar{S}) = \frac{1}{V(\mathfrak{C}_{\text{II}})} N(2f_{1},\dots,2f_{n}).\]
Thus $\Psi_{f_{1}, \ldots f_{n}}(S, \bar{S})$ are very simple. We shall try to simplify expression (\ref{eq: zonal2}). 

Let us order lexicographically the index system $\left(f_{1}, \cdots, f_{n}\right)$. We shall write $f>g$, where $f=\left(f_{1}, \cdots, f_{n}\right), g=\left(g_{1}, \cdots, g_{n}\right)$, if $f_{1}=g_{1}, f_{2}=g_{2}, \cdots, f_{q-1}=$ $g_{q-1}, f_{q}>g_{q}$.

From (\ref{eq: congruence}) follows
\begin{equation*}
    A_{f_{1},\dots,f_{n}}(T) = A_{f_{1},\dots,f_{n}}(U)A_{f_{1},\dots,f_{n}}(S)A_{f_{1},\dots,f_{n}}(U^{\prime})
\end{equation*}
Suppose $L$ is a linear space, generated by the elements of the matrices $A_{f_{1}, \cdots, f_{n}}(S)$. When $S$ is mapped into $T, L$ is mapped into itself. This means that $L$ is an invariant subspace. This invariant subspace can be decomposed into a direct sum of invariant subspaces, corresponding to $A_{2 g_{1}, \cdots, 2 g_{n}}(X)$. Evidently, $\left(2 g_{1}, \cdots, 2 g_{n}\right) \leqq\left(2 f_{1}, \cdots, 2 f_{n}\right)$, whereby the equality certainly obtains. The expression
$$
\sigma\left[A_{f_{1}, \ldots, f_{n}}(S) A_{f_{1}, \ldots, f_{n}}(\bar{T})\right]=\sigma \left[A_{f_{1}, \ldots, f_{n}}(S \bar{T})\right]
$$
is invariant on replacing $S$ and $T$ simultaneously by $U S U^{\prime}$ and $U T U^{\prime}$. Since
$$
\sigma \left[A_{f_{1}, \ldots, f_{n}}(S \bar{T})\right]=\chi_{f_{1}, \ldots, f_{n}}(S \bar{T})
$$
it follows that
\begin{equation}
    \chi_{f_{1}, \ldots, f_{n}}(S \bar{T}) = \sum_{g \leqslant f} c_{f, g} .\Psi_{g_{1}, \ldots, g_{n}}(S, \bar{T})
    \label{eq: zonal3}
\end{equation}
whereby $c_{f, f} \neq 0$. This relation is reversible, i.e., we can write
\begin{equation}
\Psi_{f_{1}, \ldots, f_{n}}(S, \bar{T})=\sum_{g \leqslant f} d_{f, g} \chi_{g_{1}, \ldots, g_{n}}(S \bar{T}).
\label{eq: zonal4}
\end{equation}

From the last formula it is evident that
$$\Psi_{f_{1}, \ldots, f_{n}}(S, \bar{T})=\Psi_{f_{1}, \ldots, f_{n}}(S \bar{T})$$
These functions $\Psi_{f_{1}, \ldots, f_{n}}(S \bar{T})$ are called spherical harmonics in $\mathfrak{C}_{\text {II }}$.

\subsection{Calculating spherical harmonics by solving linear systems}

We shall now endeavor to find an effective method of determining the coefficients of formula (6.2.5). Since $c_{f, g}$ and $d_{f, g}$ are determined only for $g \leqq f$, we shall set $c_{f, g}=d_{f, g}=0$ for $g>f$. Moreover, since we have ordered the index systems $f=\left(f_{1}, \cdots, f_{n}\right)$, we can now consider $f$ as a simple index, i.e., $f=1,2,3, \cdots$. Then the matrices $C=\left(f_{f, g}\right)_{1}^{k}$ and $D=\left(d_{f, g}\right)_{1}^{k}$ will be nonsingular triangular matrices for any integral positive $k$.

Let us consider the integral
$$
\int_{S} \int_{T} \Psi_{f}(S \bar{T}) \chi_{g}(T \bar{S}) \dot{S} \dot{T}, \quad g \leqslant f
$$

Substituting in this integral the expression for $\chi_{g}$ given by formula (\ref{eq: zonal3}), and using (\ref{eq: zonal2}), we obtain
$$
\begin{gathered}
\sum_{h \leqslant g} c_{g, h} \int_{S} \int_{T} \Psi_{f}(S \bar{T}) \Psi_{h}(T \bar{S}) \dot{S} \dot{T} \\
=\sum_{h \leqslant g} c_{g, h} \sum_{i} \sum_{j} \int_{S} \int_{T} \psi_{f}^{(i)}(S) \overline{\psi_{f}^{(i)}(T)} \psi_{h}^{(j)}(T) \overline{\psi_{h}^{(j)}(S)} \dot{S} \dot{T}=\sum_{i, j } \sum_{h \leq g} c_{g} \delta_{i j} \delta_{f h},
\end{gathered}
$$
whence follows
\begin{equation}
\int_{S} \int_{T} \Psi_{f}(S \bar{T}) \chi_{g}(T \bar{S}) \dot{S} \dot{T}=0 \quad \text { for } \quad g<f 
\label{eq: zonal5}
\end{equation}
and
\begin{equation}
\int_{S} \int_{T} \Psi_{f}(\overline{S T}) \chi_{f}(\bar{T} \bar{S}) \dot{S} \dot{T}=N_{2f} c_{f, f} \neq 0.
\label{eq: zonal6}
\end{equation}
We shall set
\begin{equation}
\beta_{f, g}=\int_{S} \int_{T} \chi_{f}(S \bar{T}) \chi_{g}(T \bar{S}) \dot{S} \dot{T}
\label{eq: zonal7}
\end{equation}
From (\ref{eq: zonal4}), (\ref{eq: zonal5}), (\ref{eq: zonal6}) and (\ref{eq: zonal7}) we obtain
\begin{equation}
\sum_{h \leqslant g} d_{\rho, h} \beta_{h, g}=0, \quad g<f 
\label{eq: zonal8}
\end{equation}
and
\begin{equation}
\sum_{h \leqslant p} d_{f, h} \beta_{h, f}=N_{2f} c_{f, f} \neq 0
\label{eq: zonal9}
\end{equation}

It is evident that the matrix $B=\left(\beta_{h, f}\right)_{1}^{k}$ is positive definite for any integral $k \geqq 1$. Equations (\ref{eq: zonal8}) and (\ref{eq: zonal9}) can be replaced by the matrix equation
\begin{equation*}
    D B=\left(\begin{array}{cccc}N_1 c_{1,1} & * & \ldots & * \\ 0 & N_2 c_{2,2} & \ldots & * \\ \cdots & \cdots & \cdots & \cdots \\ 0 & 0 & \cdots & N_k c_{k, k}\end{array}\right)=K.
\end{equation*}
Hence it is evident that the $d_{f, g}$ can be expressed by $\beta_{f, g}$ and by the elements of the matrix $K$. Since, however, the elements of the matrix $K$ are unknown, we shall obtain from (\ref{eq: zonal9}) another system of equations from which the $d_{f, g}$ are obtained recursively in terms of $\beta_{f, g}$ and some other quantities which can be readily calculated.

We shall put
\begin{equation*}
\alpha_{f}=\int_{S} \chi_{f}(S \bar{S}) \dot{S}. 
\end{equation*}
Setting $T=S$ in (\ref{eq: zonal4}) and integrating with respect to $S$, we obtain
\begin{equation}
\sum_{h \leqslant f} d_{f, h} \alpha_{h}=N_{2f}
\label{eq: zonal10}
\end{equation}

For $f=1$, we evidently have $d_{1,1}=N_{1} / c_{1,1}$. We shall assume that for $f \leqq k-1$ the quantities $d_{f, g}(g=1,2, \cdots, f)$ are expressed by $\beta_{f, g}$ and $\alpha_{f}$, and then investigate what happens for $f=k$. From (\ref{eq: zonal8}) and (\ref{eq: zonal10}) we find
\begin{equation}
    \begin{aligned}
        \begin{pmatrix}
            d_{1,1} & 0 & \ldots & 0\\
            d_{2,1} & d_{2,2} & \ldots & 0\\
            \cdot & \cdots & \cdots & \cdot\\
            d_{k,1} & d_{k,2} & \ldots & d_{k,k}
        \end{pmatrix}
        & \begin{pmatrix}
            \beta_{1,1} & \beta_{1,2} & 0 & \ldots & \alpha_{1}\\
            \beta_{2,1} & \beta_{2,2} & d_{2,2} & \ldots & \alpha_{2}\\
            \cdot & \cdots & \cdots & \cdot\\
            \beta_{k,1} & d_{k,2} & \ldots & \beta_{k,k-1} & \alpha_{k}
        \end{pmatrix}\\
        = & \begin{pmatrix}
            d_{1,1}\beta_{1,1} & * & \ldots & *\\
            0 & d_{2,1} \beta_{1,2} + d_{2,2}\beta_{2,2} & \ldots & *\\
            \cdots & \cdots & \cdots & \cdot\\
            0  & 0 & \ldots & N_{k}
        \end{pmatrix}.
    \end{aligned}
    \label{eq: zonal11}
\end{equation}
The triangular matrix in the right-hand side is nonsingular, since by (\ref{eq: zonal9}) its diagonal elements are nonvanishing. Since the matrix $D$ is the transpose of the matrix $C$ and is therefore nonsingular, it follows that the matrix
$$
B_{k}=\left(\begin{array}{lllll}
\beta_{1,1} & \ldots & \beta_{1, k-1} & a_{1} \\
\cdot  & \cdots & \cdots & \cdot \\
\beta_{k, 1} & \ldots & \beta_{k, k-1} & \alpha_{k}
\end{array}\right)
$$
is also nonsingular. From (\ref{eq: zonal11}) we have
$$
\left(d_{k, 1}, d_{k, 2}, \ldots, d_{k, k}\right)=\left(0,0, \ldots, 0, N_{k}\right) B_{k}^{-1}
$$

This proves that for any $f$ the $d_{f, g}$ are expressed in terms of $\beta_{f, g}$ and $\alpha_{f}$. It remains to set forth an effective method of calculation of $\beta_{f, g}$ and $\alpha_{f}$. In general, it's hard to obtain the value of $\beta_{f, g}$ but we will restrict ourself in a narrow setting.

\subsection{Zonal polynomials}
Choose appropriately in (\ref{eq: congruence})
\[B = USU^{\prime}\]
with real orthogonal matrix $U$ which transforms real symmetric, positive definite matrix $S$ into diagonal matrices $B = (\beta_{i})_{1}^{n}$. Our goal is to find explicit expressions of $\Psi_{f_{1},f_{2},\dots,f_{n}}(S) = \Psi_{f_{1},f_{2},\dots,f_{n}}(U^{\prime}B^{\frac{1}{2}} U, U^{\prime}U^{\frac{1}{2}}U)$ for real symmetric, positive definite matrices $S$ such that 
\[\int_{\mathfrak{C}_{\text{II}}} \Psi_{f_{1},f_{2},\dots,f_{n}}(SHTH) \dot{H} = \frac{1}{V(\mathfrak{C}_{\text{II}})}\Psi_{f_{1},f_{2},\dots,f_{n}}(S)\Psi_{f_{1},f_{2},\dots,f_{n}}(T).\]
with another symmetric, positive definite matrix $T$.

These functions are invariant under the action of orthogonal matrices, i.e., 
$$\Psi_{f_{1},f_{2},\dots,f_{n}}(B) = \Psi_{f_{1},f_{2},\dots,f_{n}}(S)$$
due to properties of invariant subspaces (\ref{eq: plethysm}). By similar discussion before (\ref{eq: zonal3}), the summation of $\Psi_{f_{1},f_{2},\dots,f_{n}}(S)$ over all partitions should be required as a constant, so that they are uniquely determined. 

Since monomials $\beta^{[f]} = \beta_1^{f_1}\beta_{2}^{f_{2}} \dots \beta_n^{f_n}$ are linearly independent, homogeneous, symmetric bases in $\mathfrak{C}_{\text{II}}$ and any homogeneous polynomial of degree $f$ can be written in the form of a linear combination of these bases. (See discussions in section 4.1.) We aim to calculate the coefficients of $\Psi_{f_{1},f_{2},\dots,f_{n}}(S)$ under these bases $\beta^{[f]}$. We shall call them zonal polynomials.

\subsection{Relations to partial differential equations}

The Laplace-Betlami equation 
\[\sum_{\alpha,\beta = 1}^{n}\sum_{j,k = 1}^{n}\left(\delta_{\alpha\beta} - \sum_{l=1}^{n} z_{l\alpha} \bar{z}_{l\beta}\right)\left( \delta_{jk} - \sum_{\gamma=1}^{n} z_{j\gamma} \bar{z}_{k \gamma}\right) \frac{\partial^{2} u}{\partial z_{j\alpha} \partial \bar{z}_{k\beta}} = 0\]	
has a solution determined by the spherical harmonics up to a constant.

Since these polynomials are always combinations of monomials of latent roots. The part of the Laplace-Beltrami operator concerned with the diagonal elements (roots) becomes
\[
\Delta = \sum_{i=1}^{n} \left( y_i^2 \frac{\partial^2}{\partial y_i^2} - (n - 3) y_i \frac{\partial}{\partial y_i} \right) + \sum_{i \neq j} \frac{y_i^2}{y_i - y_j} \frac{\partial}{\partial y_i}.
\]
This operator governs the behavior of the eigenfunctions, including zonal polynomials. The eigenvalue for this equation is given by
\[
\lambda = \sum_{i=1}^{n} k_i (k_i + n - i -1 ),
\]
where \(k_i\) are the exponents in the monomial representing the highest weight term of the zonal polynomial.

\section{An explicit formula for zonal polynomials}

Consider the integral with respect to orthogonal mattrix $Q$
	\begin{equation}
	    \frac{1}{V(\mathfrak{C}_{\text{II}})}\int_{\mathfrak{C}_{\text{II}}} \left[\operatorname{tr} D_{\beta} Q D_{l} Q^{\prime}\right]^{f} \dot{[O]},
	\end{equation}
 where $D_{\beta} = \diag (\beta_{i})_{1}^{m}$ and $D_{l} = \diag (l_{i})_{1}^{m}$ and
	\[
	V(\mathfrak{C}_{\text{II}}) = \frac{\pi^{n(n+1)/4}}{\prod_{i=1}^{n} \Gamma\left(\frac{n+1-i}{2}\right)}.
	\]
	The trace \(\operatorname{tr} D_{\beta} Q D_{l} Q^{\prime}\) can be expanded as follows
	\[\begin{aligned}
		\operatorname{tr} D_{\beta} Q D_{l} Q^{\prime} &= \beta_{1} l_{1} \cos^{2} \theta_{11} + \beta_{2} l_{1} \sin^{2} \theta_{11} \cos^{2} \theta_{22} \\
		& + \beta_{3} l_{1} \sin^{2} \theta_{11} \sin^{2} \theta_{22} \cos^{2} \theta_{33} + \cdots \\
		& + \beta_{1} l_{2} \sin^{2} \theta_{11} \cos^{2} \theta_{12} + \beta_{2} l_{2} \left(\cos^{2} \theta_{11} \cos^{2} \theta_{12} \cos^{2} \theta_{22} + \right.\\
            &\left. \quad \sin^{2} \theta_{12} \sin^{2} \theta_{22} \cos^{2} \theta_{23}
		- 2 \cos \theta_{11} \cos \theta_{12} \sin \theta_{12} \cos \theta_{22} \sin \theta_{22} \cos \theta_{23}\right)  \\
        & + \cdots
	\end{aligned}\]
	The trace can be viewed as a polynomial in \(\cos(\theta_{ij})\) and \(\sin(\theta_{ij})\). Notably, terms with odd powers of \(\cos(\theta_{ij})\) vanish upon integration. The remaining terms, examined within specific bounds, yield a homogeneous and symmetric polynomial of degree \(f\) in both \(\beta\) and \(l\). Let $l^{[f]}$ be monomial polynomials $\sum l_{1}^{f_{1}}l_{2}^{f_{2}}\dots l_{n}^{f_{n}}$
    \begin{equation}
    a_{f;f} \beta^{[f]} l^{[f]} +a_{f; f-1,1}\left[\beta^{[f]}l^{[f-1,1]}+ \beta^{[f-1,1]}  l^{[f]} \right]+\cdots
    \end{equation}
    From the deduction of section 4.4, the zonal polynomials are expressed as
    \begin{equation}
        Z_{f}(l) = \sum_{h \leq f} b_{f,h}l^{[h]}
    \end{equation}
Upon integration, the coefficients $a$ are indeed
		\[
		\begin{aligned}
			a_{f; f} &= \frac{(2f-1)!!}{n(n+2) \cdots (n+2f-2)}, \\
			& \vdots \\
			a_{f; 1^{f}} &= \frac{f!}{n(n+2) \cdots(n+2f-2)}.
		\end{aligned}
		\]
Note that all denominators 
\begin{equation}
    c_{n}= \frac{(n+2f-2)!!}{(n-2)!!}
\end{equation}
are equal and no numerators include $n$. So (4.17) is now rewritten as 
\begin{equation}
    \frac{1}{(2 f-1)!!}\left[\frac{Z_{(f)}(\beta) Z_{(f)}(l)}{c_{n}} +\left\{a^{\prime}_{f-1,1; f-1,1} \beta^{[f-1,1]}l^{[f-1,1]}+\cdots\right\}\right]
\end{equation}
where $Z_{(f)}$ equals to
$$
Z_{(f)}(l)=(2 f-1)!! l^{[f]}+\cdots+f!l^{[1^f]}
$$
The expression in the bracket $\left\{\right\}$ includes neither $\beta^{(f)}$ nor $l^{(f)}$, and the coefficients $a^{\prime}$ are given by $g_{1} + g_{2} + \dots + g_{n} = h_{1} + h_{2} + \dots + h_{n} = f$,
    \[a^{\prime}_{g;h} =(2f-1)!! a_{g;h} - b_{f; g} b_{f; h}.\]
By the same procedure, we can determine, one after another, polynomials with their own multipliers. We summarise this finding as 
\begin{theorem} The coefficients $b$ in (5.3) are the same as those coefficients $a$ through evaluating the integral (5.2) by multiplying a common factor $c_{n}$. By assuming $b_{g;1^{f}} = f!$, these polynomials (5.3) are uniquely determined.
\end{theorem}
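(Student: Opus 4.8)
The plan is to read the zonal polynomials off the explicit evaluation of the integral (5.1) and then use the triangular linear algebra of Section 4.3 to remove the remaining scalar freedom. Write $I_f(\beta,l)$ for the normalized integral (5.1) and recall $\operatorname{tr} D_\beta Q D_l Q'=\sum_{i,k}\beta_i l_k\,Q_{ik}^2$, so $I_f$ is a symmetric bilinear form $\sum_{\mu,\nu\vdash f}A_{\mu\nu}\,\beta^{[\mu]}l^{[\nu]}$, symmetric in $\beta\leftrightarrow l$ because $Q\mapsto Q'$ preserves Haar measure. The first step is to compute the leading slice $a_{f;h}:=A_{(f),h}$. The coefficient of the single monomial $\beta_1^{f}$ in $I_f$ is $\sum_{k_1,\dots,k_f}l_{k_1}\cdots l_{k_f}\int_{O(n)}Q_{1k_1}^2\cdots Q_{1k_f}^2\,dQ$, and since the first row of a Haar-random orthogonal matrix is uniform on $S^{n-1}$, the moment $\int Q_{11}^{2h_1}Q_{12}^{2h_2}\cdots\,dQ$ equals $\frac{\prod_i(2h_i-1)!!}{n(n+2)\cdots(n+2f-2)}$; grouping the $k$'s by multiset type then gives $a_{f;h}=\frac{f!\,\prod_i(2h_i-1)!!}{c_n\,\prod_i h_i!}$ with $c_n=n(n+2)\cdots(n+2f-2)=(n+2f-2)!!/(n-2)!!$. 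So every entry of the $(f)$-slice carries the same denominator $c_n$ and the integer numerator $\binom{f}{h_1,h_2,\dots}\prod_i(2h_i-1)!!$, free of $n$; in particular $a_{f;f}=(2f-1)!!/c_n$ and $a_{f;1^f}=f!/c_n$, matching the values listed before (5.4).

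Next I would identify $c_n\sum_{h\le f}a_{f;h}\,l^{[h]}$ with the zonal polynomial attached to $(f)$. By the known identities (1.2) and (1.3) the integral factors as $I_f(\beta,l)=\sum_{\kappa\vdash f}c_\kappa\,Z_\kappa(\beta)Z_\kappa(l)$ with explicit positive constants $c_\kappa$; since $(f)$ is the maximal partition in the lexicographic (hence dominance) order of Section 4.3, the monomial $\beta^{[f]}$ occurs only in the $\kappa=(f)$ summand, so the $(f)$-slice $\sum_h a_{f;h}l^{[h]}$ is a scalar multiple of $Z_{(f)}(l)$; rescaling by $c_n$ produces exactly $Z_{(f)}(l)=(2f-1)!!\,l^{[f]}+\cdots+f!\,l^{[1^f]}$ of (5.3), whose coefficients are therefore the integers $b_{f;h}=c_n a_{f;h}=\binom{f}{h_1,h_2,\dots}\prod_i(2h_i-1)!!$, with minimal coefficient $b_{(f);1^f}=f!$. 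Subtracting $c_n^{-1}Z_{(f)}(\beta)Z_{(f)}(l)$ from the integral, as in (5.5), leaves a symmetric form supported on partitions strictly below $(f)$; by the same dominance argument its new leading slice, at $(f-1,1)$, is a scalar multiple of $Z_{(f-1,1)}(l)$, and running this recursion down the finite poset of partitions of $f$ yields every $Z_\kappa$ ``one after another'', each normalized by its minimal coefficient $f!$. I expect the real work to be this induction, and its delicate point is to show that at each level the subtraction leaves a form whose leading slice is an integer multiple, over the appropriate common denominator, of the next zonal polynomial: the one-row level is transparent from the sphere moments above, but the lower levels bring in the moments $\int_{O(n)}Q_{i_1k_1}^2\cdots Q_{i_fk_f}^2\,dQ$ with mixed row indices---genuine orthogonal Weingarten values, whose denominators involve factors such as $n-1$ and $n+1$---and one must check these combine so that the ``own multipliers'' clear all denominators.

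Finally, uniqueness follows from Section 4.3: the coefficient vectors of the $\Psi_\kappa$ are pinned down by the nonsingular triangular system $DB=K$, equivalently $(d_{k,1},\dots,d_{k,k})=(0,\dots,0,N_k)B_k^{-1}$ with $B_k$ nonsingular, so the zonal polynomials are determined by the characters $\chi_\kappa$ up to one scalar per partition, and the recursion above shows that scalar is forced for every $\kappa$ once it is fixed for $(f)$. Equivalently, by Section 4.5 each $Z_\kappa$ is an eigenfunction of $\Delta$ with eigenvalue $\sum_i k_i(k_i+n-i-1)$, and since these eigenvalues are distinct for distinct partitions of $f$ (for the relevant range of $n$), each homogeneous degree-$f$ eigenspace is one-dimensional and $Z_\kappa$ is determined up to a constant. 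It then remains only to normalize, and the choice dictated by the integral---the coefficient of $l^{[1^f]}$ equal to $f!$, i.e.\ $b_{\kappa;1^f}=f!$ for every $\kappa\vdash f$---singles out the representative displayed in (5.3). The outstanding routine points are the positive definiteness of $B$, the vanishing of the odd-cosine angular integrals, and the applicability of the dominance-maximality argument, all immediate from Sections 2--4.
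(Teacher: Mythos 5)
Your route is genuinely different from the paper's. The paper's proof does not proceed by peeling off slices of the integral at all: it writes down the recursion
\[
b_{f,g} \;=\; \sum b_{f,h}\,\frac{g_{i}-g_{j}+2t}{A_{f}-A_{g}+B_{g}-B_{f}},
\qquad A_{f}=\sum_i f_i^2,\ \ B_{f}=\sum_i i f_i,
\]
where $h$ runs over partitions obtained from $g$ by moving $t$ units from row $j$ to row $i$; the denominator is the difference of the Laplace--Beltrami eigenvalues $\lambda=\sum_i k_i(k_i+n-i-1)$ of Section 4.5, so this is the James-type recursion that determines all $b_{f,g}$ from the top coefficient by descending the partial order, and the statement for $a$ follows by dividing by $c_n$. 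Your proposal instead computes the $(f)$-slice directly from sphere moments and then runs a Gram--Schmidt-style induction down the dominance order. Your top-level computation is correct and does reproduce $a_{f;f}=(2f-1)!!/c_n$ and $a_{f;1^f}=f!/c_n$; in effect you are making rigorous the expository build-up of Section 5 (the display of the $a$'s, the common factor $c_n$, and equation (5.5)) rather than the proof the paper actually gives.

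The genuine gap is the one you flag yourself: everything below the top slice. Once you subtract $c_n^{-1}Z_{(f)}(\beta)Z_{(f)}(l)$, the coefficients $a'_{g;h}$ involve the mixed moments $\int_{O(n)}Q_{i_1k_1}^2\cdots Q_{i_fk_f}^2\,dQ$ with several distinct row indices, which are orthogonal Weingarten values and not products of sphere moments; the assertion that the new leading slice at $(f-1,1)$ is again a scalar multiple of a single polynomial with integer coefficients over one common denominator is exactly the content of the theorem at that level, and your proposal asserts it rather than proves it. The dominance argument via the factorization $I_f=\sum_\kappa c_\kappa Z_\kappa(\beta)Z_\kappa(l)$ does give proportionality of each slice to some $Z_\kappa$, but only granted the existence and triangularity of the $Z_\kappa$ from Section 4, so it establishes uniqueness of the normalization more than it computes the coefficients. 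The paper's recursion avoids the Weingarten computation entirely, at the price of importing the eigenfunction property; your uniqueness argument via distinct eigenvalues of $\Delta$ is cleaner and more explicit than the paper's one-sentence justification, and is in fact the reason the paper's recursion is well posed (the denominators $A_f-A_g+B_g-B_f$ are nonzero for $g<f$). To close your argument you would either need the explicit low-order Weingarten asymptotics or, more economically, the same Laplace--Beltrami recursion the paper invokes.
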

\begin{proof} Existence.
    Let $t$ be an integer such that $h_{1} = g_{1}\geq \dots\geq h_{i} = g_{i} + t\geq \dots\geq h_{j} = g_{j} - t\geq \dots\geq h_{n} = g_{n}$ is below or equal to $f_{1} \geq f_{2} \geq \dots \geq f_{n}$ for some $i,j$. We introduce $A_{f} = \sum_{i=1}^{n}f_{i}^{2}$ and $B_{f} = \sum_{i=1}^{n}if_{i}$,
\[b_{f,g} = \sum b_{f,h}\frac{g_{i}-g_{j} + 2t}{A_{f} - A_{g} + B_{g}- B_{f}}, \quad \]
where the summation is taken over all $i,j,t= 1,2,\dots, (l_{i-1} - l_{i})\land(l_{j} - l_{j+1})$. By dividing them by a factor $\frac{(n+2f-2)!!}{(n-2)!!}$, we find from the definition (5.2) this recursion holds also for $a$.

Uniqueness. The normalization is to guarantee all partitions can be shifted.
\end{proof}

\section{Applications}
	\subsection{Distribution of latent roots of the non-central Wishart distribution}
	
	Let $X$ be a symmetric matrix having the density element
	$$
	c|X|^{\frac{1}{2}(n-m-1)} \exp \left(-\frac{1}{2} \operatorname{tr} \Sigma^{-1} X\right) d X
	$$
	where
	\[c^{-1} = 2^{\frac{mn}{2}} \pi^{\frac{1}{4}m(m-1)}\prod_{i=1}^{m} \Gamma\left(\frac{n+1-i}{2}\right) |\Sigma|^{\frac{n}{2}}\]
	The substitution of
	$$
	X = U D_{l} U^{\prime}
	$$
	gives
	$$
	c \prod l_{i}^{\frac{1}{2}(n-m-1)} \prod(l_{i}-l_{j}) \prod d l_{i} \cdot \exp \left(-\frac{1}{2} \operatorname{tr} \Sigma^{-1} U D_{l} U^{\prime}\right) d U.
	$$
	Since $\operatorname{tr}\left(U D_{l} U^{\prime}\right)=\sum l_{i}$, for an arbitrary positive number $\lambda$,
	$$
	\begin{aligned}
		-\frac{1}{2} \operatorname{tr} \Sigma^{-1} U D_{l} U^{\prime} 
		& = -\frac{1}{2 \lambda} \sum l_{i}+\frac{1}{2} \operatorname{tr}\left(\frac{1}{\lambda} I-\Sigma^{-1}\right) U D_{l} U^{\prime}\\
		& =-\frac{1}{2 \lambda} \sum l_{i}+\frac{1}{2} \operatorname{tr} D_{\beta} Q D_{l} Q^{\prime}
	\end{aligned}$$
	where $\beta_{i}=\frac{1}{\lambda}-\frac{1}{\lambda_{i}^{-}}$ and $\lambda_{i}$ are the latent roots of $\Sigma$, and $Q=V^{\prime} U, V$ being a constant orthogonal matrix such that $\Sigma=V^{\prime} D_{\lambda} V$. Applying Theorem 3.2, the above is transformed into
	$$
	\begin{aligned}
		& c\left[\prod l_{i}^{(n-m-1)/2} \prod(l_{i}-l_{j}) \exp \left(-\frac{1}{2 \lambda} \sum l_{i}\right) \prod d l_{i}\right] \\
		& \cdot\left[\exp \left(\frac{1}{2} \operatorname{tr} D_{\beta} Q D_{l} Q^{\prime}\right) \prod \sin  (\theta_{i j})^{m-j-1} \prod d \theta_{i j}\right],
	\end{aligned}
	$$
	where $Q = Q(\theta_{i j})$. To obtain the marginal distribution, let us integrate with respect to $\theta$. Expanding $\exp \left(\frac{1}{2} \operatorname{tr} D_{\beta} Q D_{l} Q\right)$, we find the last component equals to
	$$
	\sum_{f=0}^{\infty} \frac{1}{f!2^{f}} \left[\operatorname{tr} D_{\beta} Q D_{i} Q\right]^{f} \prod \sin (\theta_{i j})^{m-j-1} \prod d \theta_{i j}
	$$
	Integrating term by term, we obtain 
	$$
	\frac{\pi^{\frac{1}{2}m(m+1)}}{\prod_{i=1}^{m} \Gamma\left[\frac{1}{2}(m+1-i)\right]} \sum_{f=0}^{\infty} \frac{1}{(2 f!)} \sum \frac{N(2f_{1},2f_{2},\dots, 2f_{m})}{Z_{f}(l)} Z_{f}(\beta) Z_{f}(l)
	$$
	where $Z_{f}(l)$ are zonal polynomials of $f$th degrees.
	
\appendix

\section{Zonal polynomials up to $f=6$ degrees}

\begin{table}[]
    \centering
    \caption{Zonal polynomials up to $f=6$ degrees.}
\begin{tabular}{|c|c|c|c|}
\hline $f$ & $\kappa$ & Zonal polynomial $Z_\kappa(A)$ & $\chi_{[2k]}(1)$  \\
\hline 1 & (1) & $s_1$ & 1 \\
\hline 2 & \begin{tabular}{l}
$(2)$ \\
$\left(1^2\right)$
\end{tabular} & \begin{tabular}{l}
$s_1^2+2 s_2$ \\
$s_1^2-s_2$
\end{tabular} & \begin{tabular}{l}
1 \\
2
\end{tabular} \\
\hline 3 & \begin{tabular}{c}
$(3)$ \\
$(21)$ \\
$\left(1^3\right)$
\end{tabular} & \begin{tabular}{l}
$s_1^3+6 s_1 s_2+8 s_3$ \\
$s_1^3+s_1 s_2-2 s_3$ \\
$s_1^3-3 s_1 s_2+2 s_3$
\end{tabular} & \begin{tabular}{l}
1 \\
9 \\
5
\end{tabular} \\
\hline 4 & \begin{tabular}{r}
$(4)$ \\
$(31)$ \\
$\left(2^2\right)$ \\
$\left(21^2\right)$ \\
$\left(1^4\right)$
\end{tabular} & \begin{tabular}{l}
$s_1^4+12 s_1^2 s_2+12 s_2^2+32 s_1 s_3+48 s_4$ \\
$s_1^4+5 s_1^2 s_2-2 s_2^2+4 s_1 s_3-8 s_4$ \\
$s_1^4+2 s_1^2 s_2+7 s_2^2-8 s_1 s_3-2 s_4$ \\
$s_1^4-s_1^2 s_2-2 s_2^2-2 s_1 s_3+4 s_4$ \\
$s_1^4-6 s_1^2 s_2+3 s_2^2+8 s_1 s_3-6 s_4$
\end{tabular} & \begin{tabular}{r}
1 \\
20 \\
14 \\
56 \\
14
\end{tabular}\\
\hline
\end{tabular}

\begin{tabular}{|c|c|c|}
\hline 
5 &  Zonal polynomial $Z_\kappa(A)$ &  $\chi_{[2k]}(1)$ \\
\hline \begin{tabular}{r}
$(5)$ \\
$(41)$ \\
$(32)$ \\
$\left(31^2\right)$ \\
$\left(2^2 1\right)$ \\
$\left(21^3\right)$ \\
$\left(1^5\right)$
\end{tabular} & \begin{tabular}{r}
$s_1^5+20 s_1^3 s_2+60 s_1 s_2^2+80 s_1^2 s_3+160 s_2 s_3+240 s_1 s_4+384 s_5$ \\
$s_1^5+11 s_1^3 s_2+6 s_1 s_2^2+26 s_1^2 s_3-20 s_2 s_3+24 s_1 s_4-48 s_5$ \\
$s_1^5+6 s_1^3 s_2+11 s_1 s_2^2-4 s_1^2 s_3+20 s_2 s_3-26 s_1 s_4-8 s_5$ \\
$s_1^5+3 s_1^3 s_2-10 s_1 s_2^2+2 s_1^2 s_3-4 s_2 s_3-8 s_1 s_4+16 s_5$ \\
$+5 s_1 s_2^2-10 s_1^2 s_3-10 s_2 s_3+10 s_1 s_4+4 s_5$ \\
$s_1^5-40 s_1^2 s_1^2 s_3^2+10 s_2 s_3+6 s_1 s_4-12 s_5$ \\
$s_1^5-4 s_1^3 s_2-3 s_1 s_2^2+20 s_1^2 s_3-20 s_2 s_3-30 s_1 s_4+24 s_5$ \\
$s_1^5-10 s_1^3 s_2+15 s_1 s_2^2+20 s_1^2-2$
\end{tabular} & \begin{tabular}{r}
1 \\
35 \\
90 \\
225 \\
252 \\
300 \\
42
\end{tabular}\\
\hline
\end{tabular}
\\
\begin{tabular}{|c|c|c|c|c|c|c|c|c|c|c|c|c|}
\hline6  & $s_1^6$ & $s_1^4 s_2$ & $s_1^2 s_2^2$ & $s_1^3 s_3$ & $s_2^3$ & $s_1 s_2 s_3$ & $s_1^2 s_4$ & $s_3^2$ & $s_2 s_4$ & $s_1 s_5$ & $s_6$ & $\chi_{[2 k]}(1)$ \\
\hline (6) & 1 & 30 & 180 & 160 & 120 & 960 & 720 & 640 & 1440 & 2304 & 3840 & 1 \\
\hline (51) & 1 & 19 & 48 & 72 & -12 & 80 & 192 & -64 & -144 & 192 & -384 & 54 \\
\hline (42) & 1 & 12 & 27 & 16 & 30 & 24 & -18 & -8 & 108 & -144 & -48 & 275 \\
\hline $(41^2)$ & 1 & 9 & -12 & 22 & -12 & -60 & 12 & 16 & -24 & -48 & 96 & 616 \\
\hline $(3^2)$ & 1 & 9 & 33 & -8 & -27 & 120 & -78 & 136 & -114 & -48 & -24 & 132 \\
\hline (321) & 1 & 4. & 3 & -8 & -2 & 0 & -18 & -24 & -4 & 32 & 16 & 2673 \\
\hline $(31^3)$ & 1 & 0 & -21 & 4 & 6 & 12 & -6 & 16 & 12 & 24 & -48 & 1925 \\
\hline $(2^3)$ & 1 & 0 & 15 & -20 & 30 & -60 & 30 & 40 & -60 & 24 & 0 & 462 \\
\hline $(2^2 1^2)$ & 1 & -3 & 3 & -8 & -9 & 0 & 24 & 4 & 24 & -24 & -12 & 2640 \\
\hline $(21^4)$ & 1 & -8 & 3 & 12 & 6 & 20 & -6 & -16 & -36 & -24 & 48 & 1485 \\
\hline $(1^6)$ & 1 & -15 & 45 & 40 & -15 & -120 & -90 & 40 & 90 & 144 & -120 & 132 \\
\hline
\end{tabular}
    \label{tab:my_label}
\end{table}

\bibliographystyle{plain}
\bibliography{inventiones}

\end{document}